\documentclass[12pt,reqno]{amsart}
\usepackage{amsmath}
\usepackage{amssymb}  
\usepackage{latexsym} 
\usepackage{comment}
\usepackage{url}
\usepackage{fullpage,url,amssymb,amsmath,amsthm,amsfonts,mathrsfs}
\usepackage[usenames,dvipsnames]{color}
\usepackage[pagebackref = true, colorlinks = true, linkcolor = blue, citecolor = Green]{hyperref}
\usepackage[alphabetic,lite]{amsrefs}
\usepackage{enumitem}
\usepackage{amscd}   
\usepackage[all, cmtip]{xy} 
\usepackage{xfrac}
\usepackage[T1]{fontenc}
\usepackage{algpseudocode,algorithm,algorithmicx}

\DeclareFontEncoding{OT2}{}{} 

\usepackage[all]{xy}
\usepackage{fullpage}

\usepackage{color} 

\newcommand{\todo}[1]{{\color{blue} \sf TO DO}}



\def\act#1#2%
  {\mathop{}%
   \mathopen{\vphantom{#2}}^{#1}%
   \kern-3\scriptspace%
   #2}


\newcommand{\F}{{\mathbb F}}

\newcommand{\PP}{{\mathbb P}}

\newcommand{\al}{{\alpha}}

\newcommand{\calO}{{\mathcal O}}

\newcommand{\fp}{{\mathfrak p}}

\newcommand{\fm}{{\mathfrak m}}



\newtheorem{Theorem}{Theorem}[section]
\newtheorem{Lemma}[Theorem]{Lemma}
\newtheorem{Proposition}[Theorem]{Proposition}

\newtheorem{Remark}[Theorem]{Remark}

\theoremstyle{definition}

\numberwithin{equation}{section}

\begin{document}
\title{Factoring polynomials over function fields}

\author{Jos\'e Felipe Voloch}
\address{School of Mathematics and Statistics, University of Canterbury, Private Bag 4800, Christchurch 8140, New Zealand}
\email{felipe.voloch@canterbury.ac.nz}
\urladdr{http://www.math.canterbury.ac.nz/\~{}f.voloch}

\subjclass[2020]{Primary: 12-08 ; Secondary: 11R09}
\keywords{Polynomial factorization, function fields, irreducibility test, list decoding}
\begin{abstract}
If $K/k$ is a function field in one variable of positive characteristic, 
we describe a general algorithm
to factor one-variable polynomials with coefficients in $K$. The algorithm
is flexible enough to find factors subject to additional restrictions, e.g., 
to find all roots that belong to a given finite dimensional $k$-subspace of $K$,
more efficiently. For bounded characteristic, it runs in polynomial time, relative to factorizations
over the constant field $k$ and also provides a deterministic polynomial time irreducibility test. 
We also discuss applications to places of reducible reduction, when $k$ is a global field, and to list decoding of Reed-Solomon codes. 
\end{abstract}

\maketitle

\section{Introduction}
\label{intro}

Let $K/k$ be a function field in one variable, that is, a finitely generated extension of transcendence degree one with
$k$ algebraically closed in $K$. Let $G(T)$ be a polynomial in one variable over $K$. The algorithmic problem
of finding the irreducible factors of $G(T)$ in $K[T]$ and, in particular its roots in $K$, is a much-studied problem with
many applications, see e.g. \cites{MR790658,MR2168610,MR2537701} and the references therein. 
A noteworthy special case is the case where $K = k(x)$,
the rational function field, and the coefficients of $G(T)$ are in $k[x]$. This case corresponds to factoring polynomials in
two variables with coefficients in $k$. Some recent papers representing the state of the art of this 
special case are \cites{MR2600710,MR3709331}.

Many of the applications of the above problem actually require the solution of a more restricted problem. For instance, given $G(T)$ and a finite dimensional $k$-subspace $V$ of $K$, find the roots of $G(T)$ in $V$.
An example of an application where this restricted problem suffices is the Guruswami--Sudan list-decoding algorithm. See \cite{Gao} for a comprehensive discussion and \cites{MR3128698,MR3703706} for other approaches to this problem. We discuss an instance of this in subsection \ref{subsec:list} below.

Throughout this paper $k$ has characteristic $p>0$ and we make the assumption that the polynomial $G(T)$ to be factored is separable. The reduction to this case 
is a standard first step in all algorithms and is presented in the above cited papers. We also assume $G(0) \ne 0$. 
This paper
describes an algorithm that solves the general problem of factoring such a
$G(T)$. Additionally, the algorithm has 
improved performance when applied to the more restricted problems described above.
Indeed, we will describe an algorithm that finds a factor of $G(T)$ of prescribed
degree whose coefficients are on prescribed finite dimensional $k$-subspaces of $K$.  
We prove that, assuming the characteristic is bounded, our 
algorithm provides a deterministic polynomial time absolute irreducibility test
and, up to factorization of polynomials in $k[T]$, the algorithm runs in deterministic polynomial time. See Theorem \ref{thm:main},
Remark \ref{rem:constant} and Section \ref{sec:place} below for precise statements and discussion.
We also discuss some other applications of the ideas in this paper in Section \ref{sec:appl}.
The approach is novel and relies on casting the problem of finding a factor of $G(T)$ as finding a $k$-linear dependence relation among some elements of a quotient ring of $K[T]/(G(T))$ and applying a linear independence criterion over $k$
involving Wronskian matrices. 

Our algorithm, like most standard algorithms, needs at the beginning, a place of the function field $K$ with
a few additional properties. In our presentation of the main part of the
algorithm, in Section \ref{sec:algo}, we just assume its existence. In the subsequent Section \ref{sec:place}
we discuss how to find such a place.
This seems to, unavoidably, require a search.

Most of the standard algorithms then proceed to compute a complete factorization of
the image of the polynomial when specialized to the residue field of the place just discussed (see the
description of a generic factorization algorithm in \cite{MR2168610}). 
This is known as the lifting and recombination strategy.
The factorization step in the residue field is often easy in practice but can be difficult in certain circumstances (see the beginning of Section \ref{sec:prelim}).
These algorithms also often have a bottleneck reconstructing global factorizations from local ones.
In contrast, our algorithm does not need to compute this factorization in the residue field at the beginning, nor tries to reconstruct
global factorizations from local ones. 
Instead, it may do a partial factorization during intermediate steps using easy gcd computations.
At the end, our algorithm may need to further factor some of these partial factors.
If the objective is an irreducibility criterion, it does not require finding such a factorization at all. 

There are two algorithms in the literature that use differential operators to factor polynomials and don't follow the lifting and recombination strategy.

The first is the algorithm of \cites{Rup, Gao2} that uses a certain first order partial differential
equation and shares some of the advantages of our approach. It only applies however when $K$ is
a rational function field. One application of this approach \cite[Theorem pg. 63]{Rup} is 
to bound the size of the largest prime $p$ for which an irreducible 
poynomial in $\mathbb{Z}[x,y]$ factors modulo $p$.
Our approach allows us to obtain similar bounds in full generality 
when $k$ itself is a global field.

The second is the algorithm of \cite{MR1937466} which uses the 
linear differential operator annihilating all roots of $G(T)$ 
(and also uses Wronskians). This algorithm is restricted to characteristic
zero as the crucial \cite[Proposition 4.2]{MR1937466} does not extend to positive
characteristic. Indeed, the proposition there is about the $k$-dimension of
the set of solutions in $K$ of the linear differential operator annihilating
roots of $G(T)$ (assumed irreducible). In characteristic $p$, the set of
solutions of a linear differential operator of order $<p$ in a field
$E/K$ is a $E^p$-vector
space. But even considering $E^p$-dimension, the set of solutions in
$K$ has the same $K^p$-dimension as the $E^p$-dimension of the
set of solutions in the splitting field $E$ of $G(T)$, as follows
from \cite[Lemma 1]{honda}, as opposed to having dimension $1$, which
would be the analogue of the characteristic zero result.

\section{Preliminaries}
\label{sec:prelim}


Let $K/k$ be a function field in one variable, that is, a finitely generated extension of transcendence degree one with
$k$ algebraically closed in $K$. Factoring polynomials in $K[T]$ includes
factoring polynomials in $k[T]$ and the latter could be hard, depending on
$k$. For example, if $k$ is a finite field or a number field, the
existence of a polynomial time deterministic factoring algorithm for
$k[T]$ is an open problem, although there are polynomial time probabilistic
algorithms that also perform well in practice. If $k$ is the field of real
numbers, there may be issues with precision. Our goal, therefore, is to
present an algorithm to factor polynomials in $K[T]$ relative to 
factoring polynomials in $k[T]$. 

Also, in discussing the running time of the algorithm, the estimates will be in terms of the number of 
field operations in $k$ or finite extensions thereof. 



\subsection{The basic rings}
\label{subsec:rings}

Let $v$ be a place of $K$ with ring of integers $\calO$ and maximal ideal
$\fm$. If $q$ is a power of the characteristic $p$ of $K$, we consider the ring $\calO/\fm^q$.
We have that the completion of $\calO$ is isomorphic to $\ell[[x]]$, where $\ell = \calO/\fm$ and
$x$ is an uniformiser of $v$, that is, an element of $\fm \setminus \fm^2$.
It follows that $\calO/\fm^q$ is isomorphic to $\ell[[x]]/(x^q)$.

Let $G(T) \in \calO[T]$ be a monic polynomial such that $G \pmod v$ is separable.

The ring $R_1 = (\calO/\fm^q) [T]/(G(T))$ is an Artinian ring and, thus,
a direct sum of Artinian local rings. These summands correspond to 
irreducible factors of $G(T)$ in $(\calO/\fm^q) [T]$ which, in turn, correspond to 
irreducible factors of $G(T) \in (\calO/\fm)[T]$. 
The standard factorization
version of Hensel's lemma gives an algorithmic way to go from the latter
to the former. 

Throughout, with notation as above and for a suitable value of $q$ computed in Step 3 of Algorithm \ref{alg:main}, we write $R = K[T]/(G(T))$, $R_0 = (\calO/\fm)[T]/(G(T))$ and $R_1 = (\calO/\fm^q)[T]/(G(T))$.

\subsection{Gaussian elimination}
\label{subsec:gauss}

In this subsection, we describe a Gaussian elimination procedure on an Artinian ring $A$ which is a sum of 
finitely many Artinian local rings (such as $R_1$ above). In addition to the usual row reduction steps (described explicitly below), the
algorithm will also involve splitting the ring in a direct sum of two rings and branching the algorithm to each 
summand. To ease notation, we continue to denote by $A$ any such summand. The final output of the procedure will be a decomposition of the original ring into a direct sum of rings and, for each such direct summand,
a matrix with entries in the corresponding subring. Moreover, for each maximal ideal of this subring, the image of the matrix in the quotient field will be row reduced and will have the pivots in the same place. In particular, the rank of the reduction will be independent of the maximal ideal of this subring and will be simply called the rank of the matrix.

The procedure is as follows, we scan each column in turn, looking for an element that is a unit in some summand of $A$. If none is found, we skip the column. If one is found, we split $A$ as a sum of two rings, one of which is the maximal summand where the element is a unit. In that summand we use the unit as a pivot (and in the other summand we move on to the next entry). Namely, we replace the other rows by the appropriate multiple of the row of the pivot so that the entry in the column of the pivot is $0$. Finally, we multiply the row of the pivot by its inverse, so the pivot is replaced by $1$. The usual analysis of Gaussian elimination justifies the claimed output. Indeed, we are just doing Gaussian elimination in the various quotient fields simultaneously to the extent possible and decomposing the ring as a direct sum, when it's not possible.

Finally, we note that the direct sum decomposition steps in the above procedure can be done explicitly as follows in the case 
of $R_1$. Namely, if the entry in the matrix being inspected is $P(T)$, we compute $H_0(T) = \gcd(G(T),P(T))$ and $E_0(T)=G(T)/H_0(T)$ in $(\calO/\fm) [T]$. If $E_0(T) \ne 1$, we lift the factorization $G(T)= H_0(T)E_0(T)$ in $R_0$ to a factorization $G(T)= H(T)E(T)$ in $R_1$
and decompose $R_1$ as the direct sum of $(\calO/\fm^q) [T]/(H(T))$ and $(\calO/\fm^q) [T]/(E(T))$. This works because we assume that $G(T)$ is squarefree in $(\calO/\fm) [T]$.

\subsection{Hasse derivatives and Wronskians}
\label{subsec:hasse}

We begin by presenting some concepts and results from \cites{Schmidt,SV}.
See also \cite{GV} which proves stronger versions of the main results of
\cite{Schmidt} and may be more accessible, as well as \cite[Section 6.1]{MR2041097} 
which describes
relevant algorithms (particularly algorithms 26 and 28 there). Let $K/k$ be a function field. The usual higher derivatives
do not work well in small characteristics. A suitable replacement for higher derivatives that work in general are the Hasse derivatives.
We denote by $D^{(i)}, i=0,1,\ldots,$ the Hasse derivatives with respect to some separating variable $x$ on $K$. These are $k$-linear operators on $K$ with
$i!D^{(i)} = (d/dx)^i$ and satisfying:

\begin{equation}
\label{eq:hasse}
    \begin{aligned}
D^{(i)} \circ D^{(j)} &= {{i+j}\choose{j}} D^{(i+j)},\\
D^{(i)}(uv) &= \sum_{j=0}^i D^{(j)}(u) D^{(i-j)}(v).
 \end{aligned}
\end{equation} 

The second formula is an extension of the Leibniz rule. They are defined first in $k[x]$ by setting $D^{(i)}x^n = {n\choose i}x^{n-i}$
and extending $k$-linearly. These operators then extend uniquely to $k(x)$ and any separable extension thereof by requiring that they satisfy the formulas \ref{eq:hasse} above.

By \cite[Satz 2]{Schmidt}, $f_0,\ldots,f_m \in K$ are linearly independent over $k$ if and only if 
the Wronskian matrix $(D^{(i)}(f_j))$ has maximal rank $m+1$. In this case, there is a (lexicographically) minimal 
list of integers $0=\varepsilon_0  < \cdots < \varepsilon_m$ such that the 
matrix $(D^{(\varepsilon_i)}(f_j))$ has maximal rank $m+1$.
We have $\varepsilon_i = i$ if the characteristic is zero or large enough 
but that is not going to be the situation in this paper.
 Also, to a place $v$ of $K$, we can 
associate a (lexicographically) minimal list of integers $0=j_0< \cdots < j_m$ (called the Hermitian invariants and which depend on $v$) 
such that the 
matrix $(D^{(j_i)}(f_j)\pmod v)$ has maximal rank $m+1$. Moreover, if $x$ is a local parameter at $v$, there exists a linear transformation of the space spanned 
by the $f_i$ over the residue field of $v$ that transforms $f_i$ into a basis (called an Hermitian basis) $g_i$ with $g_i = x^{j_i}+$ higher order terms (\cite[pg. 68]{Schmidt}). 

If $K$ is the function field of an algebraic curve $Y$, then the morphism $(f_0:\ldots:f_m): Y \to \mathbb{P}^m$
has some degree $\Delta$ and $\varepsilon_i \le j_i \le \Delta$ when the $f_i$
are linearly independent (\cite[Section 2]{SV},\cite[Prop. 13]{MR2041097}).
On the other hand, if the Wronskian matrix has rank $m$ and $a_0,\ldots,a_m \in K$
satisfy $\sum_{j=0}^m a_j D^{(i)}(f_j) = 0, i=0,1,2,\ldots$ and $a_0=1$,
then $a_j \in k, j=0,1,\ldots,m$ as follows from the proof of \cite[Satz 1]{Schmidt}.

Consider a monic, separable polynomial
\begin{equation}
\label{eq:poly}
G(T) = \sum_{j=0}^s a_jT^j, a_j \in K, a_s=1.
\end{equation} 

Let $R = K[T]/G(T)$ and $t$ the image of
$T$ in $R$. We extend the operators $D^{(i)}, i \ge 0$ to $R$.
We need an expression for $D^{(i)}(t)$. We have that
\begin{equation}
\label{eq:dg}
0=D^{(i)}(G(t)) = \sum_{j_1+2j_2+\cdots+ij_i \le i} 
A_{j_1,\ldots,j_i}(D^{(1)}(t))^{j_1}\cdots(D^{(i)}(t))^{j_i}
\end{equation}
where the $A_{j_1,\ldots,j_i}$ are polynomials in $t$ and, in particular,
$A_{0,\ldots,0,1} = G'(t)$ which is invertible in $R$ and
this determines $D^{(i)}(t)$ uniquely, by induction.
They can be computed more efficiently by the algorithms of \cite{MR2041097}.

If $v$ is a place of $K$ with ring of integers $\calO$ and maximal ideal
$\fm = (x)$ then, for any $q$ power
of $p$, the operators $D^{(i)}, i < q$,
defined using $x$ as the separating variable, 
induce operators on $\calO/\fm^q$.
Indeed, $D^{(i)}(x^q)=0, i < q$, so $D^{(i)}(x^qy)= x^qD^{(i)}(y), i < q$
for any $y \in \calO$ and it follows that $D^{(i)}, i < q$ preserve $\fm^q$.
In our main algorithm, we will choose $q$ large enough so that the non-zero
elements of $K$ appearing in the course of the computation have degree smaller than $q$,
so, if the computation produces an element in $\fm^q$, it has to be the zero element.

Now, if $G(T) \in \calO[T]$ is as above and is such that $G \pmod v$ is 
separable, then the operators $D^{(i)}, i < q$
induce operators on $R_1 = (\calO/\fm^q) [T]/(G(T))$ by the same formulas.
As mentioned before, $R_1$ is an Artinian ring and, thus,
a direct sum of Artinian local rings. 
For each irreducible factor $H_0$ of $G \pmod \fm$, there is a corresponding
factor $H$ of $G \pmod{\fm^q}$ and a corresponding summand 
$(\calO/\fm^q) [T]/(H(T))$ of $R_1$. If $\fm = (x)$, then
$\calO/\fm^q$ is isomorphic to $(\calO/\fm)[X]/(X^q)$ and
$(\calO/\fm^q) [T]/(H(T))$ is isomorphic to 
$((\calO/\fm)[T]/(H_0(T)))[X]/(X^q)$ and we refer to the subring 
$(\calO/\fm)[T]/(H_0(T))$ of this latter ring as its constants.

\begin{Lemma}
\label{lem:der}
If $u \in R_1$ satisfies $D^{(i)}(u)=0, 0 < i < q$, then in each local
summand of $R_1$, $u$ is constant (in the above sense).
\end{Lemma}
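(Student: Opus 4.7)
I would work in a single local summand $A = (\calO/\fm^q)[T]/(H(T))$ and, via the isomorphism $A \cong S[X]/(X^q)$ with $S = (\calO/\fm)[T]/(H_0(T))$ recalled just before the lemma, write a general element as $u = \sum_{n=0}^{q-1} u_n X^n$ with $u_n \in S$. The goal is to show that the hypothesis forces $u_n = 0$ for $n \geq 1$. The argument splits into two parts: first, verify that the subring $S$ lies in the kernel of every $D^{(i)}$ with $0 < i < q$; second, use this and the explicit action on $X$ to extract the coefficients of $u$.

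For the first part, elements of the residue field $\ell = \calO/\fm \subset S$ lift canonically to $\ell \subset \calO \cong \ell[[x]]$ as the $x$-free constants, and so are annihilated by every $D^{(i)}$ with $i > 0$; this property persists after passing to $A$. Let $\alpha \in A$ be the unique Hensel lift of the class of $T$ in $S$, so that $S = \ell[\alpha]$ and $H_0(\alpha) = 0$ inside $A$, where $H_0 \in \ell[T]$. I would apply \eqref{eq:dg} to this last relation, using that every Hasse derivative of a coefficient of $H_0$ vanishes because those coefficients lie in $\ell$. Inductively assuming $D^{(m)}(\alpha) = 0$ for $0 < m < i$, every term of $D^{(i)}(H_0(\alpha))$ except the linear term $H_0'(\alpha)\, D^{(i)}(\alpha)$ contains a factor $D^{(m)}(\alpha)$ with $0 < m < i$ and therefore vanishes. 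Separability of $H_0$ (inherited from the separability of $G \pmod v$) makes $H_0'(\alpha)$ a unit in $A$, since its image in the field $S$ is nonzero, so $D^{(i)}(\alpha) = 0$. Combined with $D^{(i)}|_\ell = 0$, the Leibniz rule \eqref{eq:hasse} propagates the vanishing to all of $\ell[\alpha] = S$.

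For the second part, since $S$ is constant the Leibniz rule collapses to $S$-linearity of each $D^{(i)}$, and on the uniformiser class $X \in \calO/\fm^q$ we have the standard formula $D^{(i)}(X^n) = \binom{n}{i} X^{n-i}$. Therefore
\[
D^{(i)}(u) \;=\; \sum_{n=i}^{q-1} \binom{n}{i}\, u_n\, X^{n-i},
\]
whose $X^0$-coefficient is $\binom{i}{i} u_i = u_i$. The assumption $D^{(i)}(u) = 0$ for $0 < i < q$ therefore forces $u_i = 0$ for every such $i$, leaving $u = u_0 \in S$, as required.

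The main obstacle is the first part: pinning down $S$ as a subring of constants by means of the Hensel-lifted root $\alpha$ of $H_0$ together with separability, which is what makes the inductive step $H_0'(\alpha)\, D^{(i)}(\alpha) = 0 \Rightarrow D^{(i)}(\alpha) = 0$ go through. Once this is established the conclusion is a direct coefficient comparison in the $X$-expansion.
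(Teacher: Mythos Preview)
Your argument is correct and follows essentially the same route as the paper: pass to a single local summand, identify it with $\ell[X]/(X^q)$ for the field $\ell = (\calO/\fm)[T]/(H_0(T))$, and read off the coefficients of $u$ from $D^{(i)}(X^n)=\binom{n}{i}X^{n-i}$. The paper's proof simply asserts that the Hasse derivatives act this way and concludes in one line; your first part, showing via the separable relation $H_0(\alpha)=0$ and induction that every $D^{(i)}$ with $i>0$ kills $S$, supplies the justification that the paper leaves implicit.
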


\begin{proof}
Each summand is isomorphic to $\ell[X]/(X^q)$ for some field $\ell$
and $D^{(i)}(x^r) = {r \choose i} x^{r-i}$, where $x$ is the image of $X$,
so it is clear that $D^{(i)}(u)=0, 0 < i < q$ if and only if $u \in \ell$.
\end{proof}

\begin{Remark}
In the process of algorithm \ref{alg:main} below, we will
decompose $R_1$ as a direct sum but we may not go all the way to the
full decomposition as a sum of Artinian local rings.
\end{Remark}

\section{The main algorithm}
\label{sec:algo}

Our main algorithm is described below as Algorithm \ref{alg:main}. As mentioned in the introduction, it takes as input a polynomial $G(T) \in K[T]$ and
a collection of finite dimensional $k$-vector spaces $V_i \subset K, i=0,\ldots r-1$ and decides whether there exists a factor $H(T)$ of $G(T)$ of the form $H(T) = \sum_{i=0}^r b_iT^i, b_i \in V_i, i < r, b_r=1$. Given $k$-bases $\{h_{ij}\}$ for the $V_i$, the algorithm proceeds by 
computing the Wronskian matrix of the functions $h_{ij}T^i$ in a suitable ring where $G(T)=0$ and doing Gaussian elimination on this matrix, 
in the sense of Subsection \ref{subsec:gauss}, identifying a $k$-linear dependence among the $h_{ij}T^i$.

To obtain a full factorization algorithm (where the $V_i$ are not necessarily given in advance), we prove lemma \ref{lem:poles} below 
(a variant of \cite[Lemma 4.1]{MR2168610}).

\begin{algorithm}
  \caption{Find factor of $G(T)$ with restricted coefficients
    \label{alg:main}}
\hspace*{\algorithmicindent} \textbf{Input} 
\begin{itemize}[label={}]
\item A function field $K/k$
\item A polynomial $G(T) \in K[T]$ monic, separable, of degree $s$ with discriminant $f \ne 0$.
\item A place $v$ of $K/k$, with ring of integers $\calO$ and maximal ideal $\fm$ with 
$G(T) \in \calO[T]$ and $v(f)=0$ and an uniformizer of $v$ used to define the $D^{(i)}$.
\item Finite dimensional $k$-vector spaces $V_i \subset K, i=0,\ldots r-1$, with $1 \in V_0$,
together with a $k$-basis $\{h_{ij}\}$ for each $V_i$, where $r < s$. Put $h_{01}=h_{r1}=1, m = \sum \dim V_i$.
\item An integer $\Delta$ which bounds $j_i, \varepsilon_i$ for any map to $\PP^m$ obtained by viewing $\Phi$ in a quotient field of $R$ in which its entries are linearly independent over $k$.
\end{itemize}
 \hspace*{\algorithmicindent} \textbf{Output} \\
 \begin{itemize}[label={}]
 \item Either a proof that $G(T)$ has no monic factor of the form
$H(T) = \sum_{i=0}^r b_iT^i, b_i \in V_i, i < r, b_r=1$.
\item Or a direct summand $S$ of the ring $R_1= (\calO/\fm^q) [T]/(G(T))$ and elements 
$b_i, i=0,\ldots,r$ of $S$ such that, for each local summand of $S$, a factor of $G(T)$ of
the required form can be obtained (see Theorem \ref{thm:main} and Remark \ref{rem:constant}).
\end{itemize}
  \begin{algorithmic}[1] 
    \State $R = K[T]/(G(T))$ and $t$ the image of $T$ in $R$. 
    \State $\Phi \in R^{m+1}$ the row vector with entries (in some order) $h_{ij}t^i \in R, i=0,\ldots,r$ and for each $i<r, j=1,\ldots,\dim V_i$ and $j=1$ for $i=r$.
    \State Compute $q$, smallest power of $p$ with $q > \max\{m,\Delta\}$ (so $q \le p \max\{m,\Delta\}$).
    \State $R_1 = (\calO/\fm^q) [T]/(G(T)), R_0=(\calO/\fm) [T]/(G(T))$.
    \State Compute matrix $M$ with rows $D^{(i)}(\Phi), i=0, \ldots, q-1$, working in $R_1$. 
    \State Do the Gaussian elimination on $M$, working in $R_1$, as described in Subsection \ref{subsec:gauss}.
     \For{Each direct summand $R'$ of $R_1$ returned by the Gaussian elimination step and corresponding matrix $M$}
        \If{$M$ has full rank $m+1$} 
           \State \Return{$G(T)$ has no factor of the form required for the output in $R'$.}
        \ElsIf{$M \pmod{\fm}$ has rank $m$}
           \State Compute solution $u_{ij}$ in $R'$ of $\sum_{ij} u_{ij} D^{(\ell)}(h_{ij}t^i) = 0,  u_{r1}=1, \ell=0,1,\ldots,q-1$.
           \State \Return{$b_i = \sum_{j} u_{ij} h_{ij}, i=0,\ldots,r$ in $R'$.}
         \Else
           \State{Go back to 2, replace the ring $R_1$ with the subring $R'$, remove an entry from $\Phi$ such that the corresponding column of the matrix $M$ has no pivot and set $m$ equal to $m-1$.}  
      \EndIf
      \EndFor
  \end{algorithmic}
\end{algorithm}


\begin{Lemma}
\label{lem:poles}
Let $G(T)=\sum_{i=0}^s a_i T^i \in K[T]$ with $a_s=1, a_0 \ne 0$ 
and let $\tau$ be a root of $G(T)$ in some finite extension $L/K$
and $H(T) = \sum_{i=0}^r b_i T^i$ be its monic minimal polynomial over $K$.
Then, for any place $v$ of $L$ extending a place of $K$,
$$v(\tau) \ge \min_{0,1,\ldots,s-1} v(a_i)/(s-i)$$
and, for any place $v$ of $K$,
$$v(b_j) \ge (r-j)\min_{0,1,\ldots,s-1} v(a_i)/(s-i).$$

\end{Lemma}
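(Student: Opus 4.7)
The first inequality is a standard Newton-polygon style argument via the ultrametric property. Let $\alpha := \min_{0 \le i \le s-1} v(a_i)/(s-i)$. I would argue by contradiction: suppose $v(t) < \alpha$. Then for every $i \in \{0,1,\ldots,s-1\}$, $(s-i)v(t) < v(a_i)$, hence
\[
s\,v(t) \;<\; v(a_i) + i\,v(t) \;=\; v(a_i t^i).
\]
Since $G(t)=0$ gives $t^s = -\sum_{i=0}^{s-1} a_i t^i$, the strong triangle inequality yields
\[
s\,v(t) \;=\; v(t^s) \;\ge\; \min_{i<s} v(a_i t^i) \;>\; s\,v(t),
\]
a contradiction. (Nothing changes if $a_i=0$ for some $i<s$; such indices contribute $+\infty$ to the min.)

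For the second inequality I would pass to a splitting field. Let $t_1,\ldots,t_r$ be the roots of $H(T)$ in an algebraic closure $\Kbar$; since $H\mid G$, each $t_\ell$ is a root of $G$. Fix an extension $w$ of the given place $v$ of $K$ to $\Kbar$, with ramification index $e$, so that $w|_K = e\cdot v$. Applying the first part of the lemma with $L = K(t_\ell)$ (and $w$ restricted to it) yields
\[
w(t_\ell) \;\ge\; \min_{i<s} w(a_i)/(s-i) \;=\; e\cdot \alpha
\]
for every $\ell$. By Vieta's formulas, $b_j = (-1)^{r-j} e_{r-j}(t_1,\ldots,t_r)$, and each monomial in the elementary symmetric polynomial $e_{r-j}$ is a product of exactly $r-j$ of the $t_\ell$. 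Thus $w(b_j) \ge (r-j)\,e\,\alpha$. Finally $b_j \in K$, so $w(b_j) = e\cdot v(b_j)$, and dividing by $e$ gives $v(b_j) \ge (r-j)\,\alpha$, as claimed.

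The only subtlety is the bookkeeping for the ramification index when extending $v$ to a splitting field, but this factor $e$ cancels since the coefficients $b_j$ already lie in $K$. I do not expect any real obstacle: the argument is essentially Newton polygon plus Vieta, and is presumably the same idea used in the proof of the analogous \cite[Lemma 4.1]{MR2168610} that this lemma is a variant of.
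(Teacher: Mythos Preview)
Your proof is correct and follows essentially the same approach as the paper: a contradiction via the ultrametric inequality for the first inequality, then Vieta's formulas after extending $v$ to the splitting field for the second. The only difference is that you spell out the ramification-index bookkeeping, while the paper simply says ``extending $v$ to a valuation of the splitting field of $H$'' and leaves the normalization implicit.
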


\begin{proof}

Recall that we assume throughout that $G(0)\ne 0$, so $\tau \ne 0$.
If $iv(\tau)+v(a_i) > sv(\tau)$ for all $i<s$, then
$\infty = v(G(\tau)) = \min\{iv(\tau)+v(a_i)\} = sv(\tau)$, contradiction. 
This gives the first part of the lemma.

We have that $b_j$ is the $(r-j)$-th elementary symmetric function on the conjugates
of $\tau$ so the second part follows from the first by extending $v$ to
a valuation of the splitting field of $H$.
\end{proof} 

Lemma \ref{lem:poles} provides bounds for the valuations of
the coefficients of the potential factors of $G(T)$ and these bounds can be used to define spaces $V_i$ such that if $G(T)$ factors, it has factors of the form $H(T) = \sum_{i=0}^r b_iT^i, b_i \in V_i, i < r, b_r=1$ for these $V_i$. To obtain $k$-bases for these $V_i$'s (as required in the algorithm) one may apply the results of \cite{HessRR}. It also follows that we can take 
\begin{equation}
\label{eq:Delta}
\Delta = -r\sum_v \min\{0, \min_{0,1,\ldots,s-1} v(a_i)/(s-i)\}
\end{equation}
as a bound for the $j_i, \varepsilon_i$ as required by Algorithm \ref{alg:main}.
Indeed this bound is required so that an element that appears in the computation as zero in $\calO/\fm^q$ is zero also in $K$. Any factor of $G(T)$ has coefficients of degree at most $q$, so if it appears as a solution over $\calO/\fm^q$ during our computation then we are assured that it is also a solution in $K$.
We also note that $\Delta = 0$ implies that all of the coefficients $a_i$ belong to $k$, and this case is discussed immediately following Remark \ref{rem:constant}.

We begin by proving that Algorithm \ref{alg:main} performs as described and runs in polynomial time. We defer a detailed estimate of the running time and a discussion of how to compute the auxiliary place to Section \ref{sec:place}.
\begin{Theorem}
\label{thm:main}
Given the above input, Algorithm \ref{alg:main} runs in deterministic polynomial time
in $p,s,\Delta$ (measured in number of operations in the field $\calO/\fm$) and 
outputs either a certificate that $G(T)$ has no factor of the required form or a decomposition
of $R_1$ as a direct sum of at most $s$ rings.
Moreover, for each such summand ($R'$, say), the algorithm outputs elements $u_{ij}$ of $R'$ 
that are constant in each summand of the decomposition of $R'$ into local rings and
from which a factor of $G(T)$ of the required form can be constructed or a certificate that this
summand does not yield such a factor. 
In particular, the algorithm provides a deterministic polynomial time absolute
irreducibility test in characteristic $p$ for $p$ polynomially bounded in $s, \Delta$ and a general factoring algorithm modulo
factoring in $k[T]$ under the same conditions.
\end{Theorem}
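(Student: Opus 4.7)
The plan is to reduce the theorem to three ingredients: (i) a correspondence between monic factors of $G(T)$ of the prescribed form and nontrivial $k$-linear dependence relations among the entries of $\Phi$ in an appropriate quotient of $R = K[T]/(G(T))$; (ii) a justification that the Wronskian matrix $M$ computed in the Artinian ring $R_1$ faithfully detects these dependencies; and (iii) bookkeeping for the complexity.

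First I would establish the correspondence. If $G = HE$ with $H(T) = \sum_{i=0}^r b_i T^i$ monic, $b_r=1$, and $b_i = \sum_j u_{ij} h_{ij} \in V_i$, then in $R/HR \cong K[T]/(H(T))$ the identity $\sum_{i,j} u_{ij} (h_{ij}t^i) = 0$ holds with $u_{r1}=1$; conversely, from any such identity in a local summand, reading off $b_i = \sum_j u_{ij} h_{ij}$ gives candidate coefficients for a factor. I would then analyze the outcomes of Gaussian elimination branch by branch. The full-rank branch uses Schmidt's Wronskian criterion in the residue field of each maximal ideal of $R_1$ to rule out any $k$-linear dependence, whence no factor of the prescribed form exists. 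The non-invertible-pivot branch exploits the fact that such a pivot $P \in R_1$ reduces modulo $\fm$ to a zero-divisor in $R_0$; its gcd with $G$ yields a nontrivial factor of $G \pmod \fm$, which the Hensel lemma of Section \ref{subsec:rings} lifts to a splitting of $R_1$, on which we recurse. In the rank-$m$ branch, the computed vector $(u_{ij})$ annihilates $D^{(\ell)}(\sum u_{ij}h_{ij}t^i)$ for all $0 \le \ell < q$, so Lemma \ref{lem:der} guarantees that the $u_{ij}$ are constants in each local summand, yielding the advertised elements. The fourth branch, in which the rank is strictly less than $m$, simply drops one entry of $\Phi$ and iterates.

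The probably hardest step is (ii), confirming the Wronskian criterion's validity in $R_1$ rather than in a field. I would argue that Schmidt's integers $\varepsilon_i$ and the place-dependent $j_i$ satisfy $\varepsilon_i \le j_i \le \Delta < q$, so the derivatives $D^{(\ell)}$ for $0 \le \ell < q$ capture all Wronskian information visible in any residue field of $R_1$; computing them modulo $\fm^q$ neither loses genuine relations (the relevant minors have $v$-valuation at most $\Delta < q$) nor introduces spurious ones. This is where the choices of $\Delta$ and $q$ in steps 3--4 are used crucially, and it is also here that one must track that splittings triggered by non-invertible pivots preserve all invariants of the recursive calls, in particular that $\Delta$ only decreases.

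For complexity, the outer loop strictly shrinks $|\Phi|$ when it fires, so it runs at most $m+1$ times, and the recursion on factors of $G$ has depth at most $s$, producing a direct-sum decomposition of $R_1$ into at most $s$ pieces. Each invocation performs Gaussian elimination on a $q \times (m+1)$ matrix over the rank-$qs$ $\calO/\fm$-algebra $R_1$, which, together with $q \le p\max(m,\Delta)$, gives a cost polynomial in $p, s, \Delta$ measured in $\calO/\fm$ operations. For absolute irreducibility, I would run the algorithm for each $r \in \{1,\ldots,\lfloor s/2\rfloor\}$ with $V_i$ taken to be the Riemann--Roch spaces dictated by Lemma \ref{lem:poles} with $\Delta$ bounded by \eqref{eq:Delta}; this gives an irreducibility test polynomial in $p, s, \Delta$, and hence polynomial in $s, \Delta$ under the stated boundedness hypothesis on $p$.
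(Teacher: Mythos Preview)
Your outline follows the paper's approach, but the rank-$m$ branch has a genuine gap. You write that the kernel vector $(u_{ij})$ ``annihilates $D^{(\ell)}(\sum u_{ij}h_{ij}t^i)$ for all $0\le \ell<q$, so Lemma~\ref{lem:der} guarantees that the $u_{ij}$ are constants.'' This inference is wrong on two counts. First, what step~15 gives is $\sum_{ij} u_{ij}\,D^{(\ell)}(h_{ij}t^i)=0$, which is \emph{not} the same as $D^{(\ell)}\bigl(\sum_{ij}u_{ij}h_{ij}t^i\bigr)=0$ unless you already know the $u_{ij}$ are killed by the $D^{(\ell)}$. Second, even granting the latter identity, Lemma~\ref{lem:der} would only say that the sum $\sum u_{ij}h_{ij}t^i$ is constant, not that each individual $u_{ij}$ is. The missing ingredient is the Schmidt/Garc\'{\i}a--Voloch argument (quoted at the end of \S\ref{subsec:hasse} and invoked in the paper's proof via \cite[Theorem~1]{GV}): from the corank-one kernel equations one deduces $D^{(r)}(u_{ij})=0$ for all $0<r<q$, and \emph{then} Lemma~\ref{lem:der} applies to each $u_{ij}$ separately.

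There is a second, smaller gap. Once you know the $u_{ij}$ equal constants $\hat u_{ij}$ in a local summand $S$, you still only have $\sum \hat u_{ij}h_{ij}t^i=0$ modulo $\fm^q$. You need to argue that this relation holds in the function field $L$ (where $t$ is a root of the factor of $G$ corresponding to $S$). The paper does this by observing that $\sum \hat u_{ij}h_{ij}t^i$ then vanishes to order at least $q$ at the place $w\mid v$ of $L$, while its degree as a function on the associated curve is at most $\Delta<q$; hence it is identically zero. Your paragraph~(ii) gestures at this (``nor introduces spurious ones'') but the concrete degree-versus-order argument is what actually makes it work, and it is not the statement about valuations of minors that you wrote. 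Also, a minor point: invoking Schmidt's criterion ``in the residue field of each maximal ideal of $R_1$'' is not quite right, since those residue fields are finite extensions of $\calO/\fm$, not function fields; the full-rank argument is the direct one you already gave in~(i), namely that a factor yields a $k$-vector in the kernel of $M$.
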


\begin{proof}

By induction on $m$. Assume $m=1$. Since $1 \in V_0, \Phi = (1,t^r)$ and $M$ has rank $1$ or $2$.
If the rank is $2$ in $R'$, it is clear there is no factor of the required form. If the rank is $1$ in $R'$, this means that $D^{(i)}(t^r)=0, i =1, \ldots, q-1$, so $t^r$ 
is constant in any local 
summand of $R'$, the solution of the linear system is $u_{01}=-t,u_{r1}=1$ and $H(T)=T-t$ is a factor of $G(T)$ of the required form. 

As mentioned above, step 3 is dealt with in general by equation \ref{eq:Delta} unless there is a better 
bound available. 


As mentioned above, the operators $D^{(i)}, i <q$ act on $R_1$ and the computation of the matrix $M$
in step 6 is polynomial in operations in $R_1$ but $\dim_{\calO/\fm} R_1 \le sq$ giving a bound in terms
of the number of operations in $\calO/\fm$.

The process of Gaussian elimination has running time polynomial in the size
of the matrix just as in the field case.At the end of it, we arrive at a decomposition of $R$ as a sum of at most $s$
rings and the image of $M$ in each of these is put in row echelon form.

Those summands of $R_1$ where $M$ has full rank $m+1$ yield no factor of
$G(T)$ of the required form, since such a factor is a linear relation among the entries of $\Phi$ with constant coefficients and, applying $D^{(i)}, i <q$ to this relation shows that the coefficients are in the kernel of $M$. For other factors $R'$ where $M$ has rank $m$, we compute the
$u_{ij}$ as described in step 11. It follows from the proof of \cite[Theorem 1]{GV}
that $D^{(r)}(u_{ij})=0, 0 < r < q$ and thus, from Lemma \ref{lem:der} the $u_{ij}$ are constant in each local factor of the decomposition of $R'$. 
Let $S$ be one such factor. It determines a factor $H(T)$ of $G(T)$ and a place $w|v$ in the function field $L$ obtained 
by adjoining a root $t$ of $H(T)$ to $K$. Moreover, if ${\hat u}_{ij}$ are the constants in $S$ corresponding to the
$u_{ij}$, the fact that $\sum u_{ij}h_{ij}t^i=0$ in $S$, shows that $\sum {\hat u}_{ij}h_{ij}t^i$ vanishes to order
at least $q$ at $w$. But $q > \Delta$ and $\Delta$ is an upper bound for the degree of the morphism associated
to $\Phi$, therefore an upper bound for the degree of $\sum {\hat u}_{ij}h_{ij}t^i$ and this implies that $\sum {\hat u}_{ij}h_{ij}t^i=0$ in $L$. Hence $\sum {\hat u}_{ij}h_{ij}T^i$ yields a factor of $G(T)$ of the required form, if the ${\hat u}_{ij}$ are elements of $k$ and, otherwise, $S$ does not yield a factor of $G(T)$ of the required form.
For the factors $R'$ where $M$ has rank smaller than $m$, we decrement $m$, drop an entry of $\Phi$ as described, rerun the algorithm and are done by induction.

For the last two claims of the theorem, first notice that when $G(T)$ is absolutely irreducible, the matrices $M$ will have full rank $m+1$
in every factor ring in play on the algorithm so the algorithm will terminate with a certificate of absolute irreducibility. Finally, a general factoring algorithm follows from Lemma \ref{lem:poles} and the discussion surrounding it.
\end{proof}

\begin{Remark}
\label{rem:constant}
What Algorithm \ref{alg:main} and Theorem \ref{thm:main} don't do is to identify the $u_{ij}$ with
specific elements of an algebraic extension of $k$, necessarily. For that, we need to further factor
the factor of $G(T) \pmod \fm$ corresponding to the ring $R'$ as a product of irreducibles
to obtain the full decomposition of $R'$ as a sum of local rings
and identify the $u_{ij}$ with constants in each summand. Note that it is possible that $G(T) \pmod \fm$
has irreducible factors of degree greater than $1$ and, in the local ring corresponding to that factor,
$\sum {\hat u}_{ij}h_{ij}T^i$ will not give a factor of $G(T)$ of the required form over $k$. If such an
irreducible factor of $G(T) \pmod \fm$ is detected, the corresponding local ring can be discarded.
For each linear factor of $G(T) \pmod \fm$, once it is identified, we obtain a summand of $R'$ which is
a local ring with residue field $k$ and projecting each ${\hat u}_{ij}$ to the residue fields yields the $u_{ij}$,
completing the factorization process.

As mentioned in the introduction, the first step in most standard factoring algorithms 
for $K[T]$ is to fully factor $G(T) \in (\calO/\fm)[T]$
and how it is performed depends on the
nature of the field $k$. In our algorithm, this step
may not be required at all (if $G(T)$ has no factors, or a single irreducible 
factor, of the required form) or it may only be needed for a proper factor of $G(T)$. 

On the other hand, it is not claimed that the algorithm, as described, will output all factors of the required form, only that it will output at least one such factor, in case it exists. This can be easily remedied by dividing by the output factors and rerunning the algorithm with the quotient as input.
\end{Remark}

We will discuss an example but, beforehand, here is a non-example. If $G(T) \in k[T]$,
that is, has constant coefficients, then $D^{(i)}(t)=0, i >0$, the matrix $M$ has always 
rank one and the algorithm unravels to the base case $m=1$. The polynomial $T-t$ is
a factor of $G(T)$ for $G(t)=0$ and we are left with the task of factoring $G(T)$ over
the constant field $k$.

For a more representative example consider 
$$G(T)= T^4 + (x+1)T^3 + (x^2+1)T^2+(x^3+x^2+1)T+(x^2+x) \in \F_2(x)[T].$$
We look for factors of $G(T)$ of the form $T+ax+b$, so $r=1$, $V_0$ is spanned
by $1,x$, $\Phi = (1,x,t)$ where $t$ is the image of $T$ in $(k[x]/(x^4))[T]/(G(T))$ and $m=2$. Modulo the ideal $(x+1)$, $G(T)$ reduces
to $T^4+T$. We find, using \ref{eq:dg} that $D^{(2)}(t) = J(t)/G'(t)^3$, where
$$J(T) = (xT^5 + (x^2 + x)T^4 + x^5T + (x^6 + x^5)).$$

The gcd of $G(T)$ and $J(T)$ is $H(T)=T^2+T + x^2+x$ and, switching to
the ring $R_1=(k[x]/(x+1)^4)[T]/(H(T))$, we let $t$ be the image of $T$, forcing $D^{(2)}(t) = 0$. 
Using \ref{eq:dg} now with $H$, we find that $D(t)=1$.
The matrix $M$ is therefore
$$M=
\begin{pmatrix}
1&x&t\\
0&1&1\\
0&0&0
\end{pmatrix}
$$
Now, we solve the system $t+ax+b= D(t)+a=0$, so
$a=D(t)=1, b=t+x$.
Now, modulo the ideal $(x+1)$, we have $H(T) = T^2+T = T(T+1)$ and we lift this
factorization to $k[x]/(x+1)^4$ and find $H(T) = (T+x)(T+x+1)$. Now, $R_1$ is a direct sum
of two rings from this factorization and $b=0,1$ respectively in each of the factors.
Consequently, $G(T)$ has the factors $T+x, T+x+1$ of the required form.

\begin{Remark}
If the characteristic is zero, $R_1$ is not defined and if the characteristic is large, $R_1$ is too big. An extension of the algorithm of this paper to those cases would require a replacement for $R_1$. For theoretical results, such as Theorem \ref{thm:height}, we can work with $R=\calO [T]/(G(T))$ but for algorithmic purposes we need a ring finite dimensional over $k$. We 
could work with the ring $(\calO/\fm^n) [T]/(G(T))$ for some appropriate choice of $n$, but
the higher derivatives are not typically well-defined as operators on this ring. Instead, it makes sense to look at
$D^{(i)}: (\calO/\fm^n) [T]/(G(T)) \to (\calO/\fm^{n-i}) [T]/(G(T))$.
We have not worked out the full details of this possibility.
\end{Remark}

\section{Running time estimates and finding a suitable place}
\label{sec:place}

We retain the notation of the previous section and especially of Algorithm
\ref{alg:main}.

\begin{Theorem}
\label{thm:running}
Given a place $v$ as in the input of Algorithm \ref{alg:main}
with $[\calO/\fm:k] =d$, the algorithm runs in 
$O(ds(p\max\{m,\Delta\})^4)$ field operations in $k$.
\end{Theorem}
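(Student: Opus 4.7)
My plan is to account for the cost of each step of Algorithm \ref{alg:main} in terms of $k$-operations, setting $Q := p\max\{m,\Delta\}$ so that $q \le Q$, $m+1 \le Q+1$, and the ambient ring $R_1 = (\calO/\fm^q)[T]/(G(T))$ has $k$-dimension $dsq \le dsQ$. First I would fix representations: elements of $\calO/\fm$ as $d$-dimensional $k$-vectors with arithmetic via a precomputed multiplication table, and elements of $R_1$ as arrays of length $sq$ in $\calO/\fm$, so that a single addition or multiplication in $R_1$ costs $O(dsQ)$ $k$-operations under the standard quasi-linear polynomial-multiplication model.

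Next I would bound the cost of constructing the matrix $M$. The Hasse derivatives $D^{(i)}(t) \in R_1$ for $0 \le i < q$ are determined uniquely and recursively by (\ref{eq:dg}); computing them one at a time via the algorithms of \cite[Section 6.1]{MR2041097} takes $O(q)$ ring operations in $R_1$. The Leibniz rule then yields all entries $D^{(\ell)}(h_{ij}t^i)$ for $0 \le \ell < q$ and the $m+1$ basis vectors in $O(qm) = O(Q^2)$ further ring operations. This phase therefore costs $O(dsQ^3)$ $k$-operations, well within the target.

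The dominant step is the Gaussian elimination on the $q\times(m+1)$ matrix $M$ over $R_1$. Standard row reduction uses $O(q(m+1)\min(q,m+1)) = O(Q^3)$ elementary operations in $R_1$, giving the claimed $O(dsQ^4)$ total. The splittings in steps 9--10 occur at most $s-1$ times overall, since each refines a summand into two summands and the full decomposition produced in the proof of Theorem \ref{thm:main} has at most $s$ parts; each splitting costs a gcd in $(\calO/\fm)[T]$ and a Hensel lift to $(\calO/\fm^q)[T]$ on polynomials of degree at most $s$, both polynomial in $d,s,q$ and absorbed into the main estimate. The linear solve in step 15 is itself an $O(Q^3)$ row reduction over $R_1$, again within $O(dsQ^4)$.

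The main subtlety I expect to have to handle carefully is the combined bookkeeping for the recursion on $m$ (step 17) and the splittings: one must verify that the partial row reduction already performed on $M$ before a split is inherited by both summands rather than redone, and that the recursive call triggered by dropping an entry of $\Phi$ runs on a strictly smaller instance inside a proper summand of $R_1$. Provided the decomposition-compatibility of the $D^{(i)}$ is used to transport the current state of the elimination to each summand, summing over the $O(s)$ splittings and $O(m) \le O(Q)$ recursive calls yields a geometric-style bound whose total is still $O(dsQ^4)$, because the work done on each summand is proportional to its relative $\calO/\fm$-dimension.
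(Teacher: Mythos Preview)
Your proposal is correct and follows essentially the same route as the paper: bound a single $R_1$-operation by the ring's $k$-dimension, cost the construction of $M$ at $O(q^2)$ and Gaussian elimination at $O(q^3)$ ring operations, and multiply out using $q \le p\max\{m,\Delta\}$. You are in fact more careful than the paper's short proof about the splitting and recursion-on-$m$ bookkeeping (steps 8--11 and 17), which the paper does not address explicitly.
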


\begin{proof}
The ring $R_1$ is initialized in the algorithm and during its execution
it is replaced by its direct summands, of which there are at most $s$.
From \cite[Section 6.1]{MR2041097}, the computation of the matrix
$M$ takes $O(q^2)$ operations in each ring $R_1$ and Gaussian elimination
of the matrix $M$ takes $O(q^3)$ operations in each ring $R_1$. The latter
dominates the former. The initial ring $R_1$ is a vector space over
$\calO/\fm$ of dimension $sq$, so we can bound the running time of these
calculations as $O(sq^4)$ operations in $\calO/\fm$. Using now that
$q \le p\max\{m,\Delta\}$ we arrive at a running time of
$O(s(p\max\{m,\Delta\})^4)$ operations in $\calO/\fm$. If we let
$d= [\calO/\fm:k]$, we finally arrive at $O(ds(p\max\{m,\Delta\})^4)$
operations in $k$. 
\end{proof}

\begin{Remark}
There are faster algorithms in the special cases of most interest,
e.g. $k$ finite, $K=k(x)$ \cites{MR2600710,MR3709331} (e.g. \cite[Theorem 1]{MR2600710}, seems to
translate to a running time of $O(ds(\max\{m,\Delta\})^3)$ field operations in $k$) or finding a
root of $G(T)$, again for $k$ finite \cite{Gao}. There does not seem to
be an algorithm with the same generality and flexibility with a stated
running time for direct comparison.
\end{Remark}

It remains to make explicit how to find a suitable
place $v$ and compute $d$.  We are given a function field $K/k$ and $f \in K, f \ne 0$, we need to construct a place $v$ of $K$ such that $v(f)=0$, as well as an uniformizer for this place. Without
loss of generality, we assume that $k$ is finitely generated over its prime field. We can also assume that
$f \notin k$, for otherwise the condition $v(f)=0$ is automatic and we can replace $f$ by an element of $K \setminus k$
in order to produce the place $v$. 
Under this additional hypothesis, there are only at most $2[K:k(f)]$ places of $K$
with $v(f) \ne 0$ as $[K:k(f)]=\sum_v \max\{0,v(f)\} = -\sum_v \min\{0,v(f)\}$.
Hence, to find $v$ it suffices to generate enough places of $K$
until a place with $v(f)=0$ is found.
It seems that such a search is, at the moment, unavoidable. In the case $k$ finite, $K=k(x)$, for example, $f(x) = a(x)/b(x), a(x),b(x) \in k[x]$ 
and all that is required is an element $\alpha$ (possibly in an extension of $k$) with $a(\alpha)b(\alpha) \ne 0$ which can be found
by a simpleminded search over a suitably large extension of $k$.
However, such a search can be more 
difficult in the generality we work with and some of the literature seems to gloss over this point.
We assume $K/k(f)$ separable. For a general discussion of how to reduce to
this case, see \cite{Steel}.
Then $K=k(f,g)$ for some $g \in K$ and there
is $P(x,y) \in k[x,y], P(f,g)=0$. We then find 
$\alpha \ne 0$ in $k$ or in an extension field such that $P(\alpha,y)$ is separable.
There is a place of $K$ corresponding to each irreducible factor of
$P(\alpha,y)$ over $k(\alpha)$ and, if $m(x)$ is the minimal polynomial 
of $\alpha$ over $k$, then $m(f)$ is an uniformizer for any such place,
as $m(f)$ is an uniformizer for the corresponding place of $k(f)$ which,
by construction, is unramified in $K$. 
An easy way of ensuring that $P(\alpha,y)$ is separable is to require
that it be irreducible in $k(\alpha)[y]$. This has the advantage that
irreducibility tests are easier than factorization and that $m(x)=x-\alpha$
in this case. On the other hand, this makes the degree of the place bigger.

Assume first that $k$ is a finite field. This case is discussed in
\cite[Algorithm 3.2]{Gao} with some additional assumptions which are 
relevant there but not here, so we just give a simplified discussion.
Basically, one just searches for $\alpha$ in $k$ or an extension thereof
until one is found with $P(\alpha,y)$ is irreducible in $k(\alpha)[y]$. Chebotarev's 
density theorem implies that $\alpha$ exists if $\# k$ is large enough in terms of
$[K:k(f)]$ and the genus of $K$.
It is worth pointing out that we may not have a suitable place
with residue field $k$ or even an extension of $k$ of small degree as
we can see by considering a simple example such as 
$G(T)=T^p -(x^{p^n}-x)T+1 \in \F_p(x)[T]$ (which has $f=\pm (x^{p^n}-x)^p$). 

The case where $k$ is not algebraic over its prime field can be tackled
as follows. Let $k_0$ be the algebraic closure in $k$ of its prime field.
Then $K$ and $k$ are respectively the function field of varieties $X,Y$
over $k_0$ with a map $X \to Y$ of relative dimension $1$. We realize
$X$ as a hypersurface of projective space and intersect $X$ with random
hypersurfaces. As long as these hypersurfaces intersect $X$ in an irreducible
subset that is not a component of the divisor of $f$ and is transversal to
the generic fiber of the map $X \to Y$, such a hypersurface will define a
place of $K/k$ satisfying our requirements and the equation of the 
hypersurface is the uniformizer we need. The version of Bertini's 
theorem over finite fields from \cite{MR3402695} guarantees that, for high 
enough degree, most hypersurfaces satisfy our conditions. Checking irreducibility can be done using the main algorithm of this paper (applied to
a field of smaller transcendence degree) and the other conditions can be
checked directly.
Alternatively, one can use the explicit version of Hilbert's irreducibility
theorem for function fields from \cite{MR4252300} to estimate how far one might need
to search.

\section{Other applications}
\label{sec:appl}

\subsection{Arithmetic properties}

As mentioned in the introduction, \cite{Rup} has a bound on the size
of the largest prime $p$ for which an irreducible
polynomial in $\mathbb{Z}[x,y]$ factors modulo $p$.
We substantially extend this result. We will consider polynomials in
$K[T]$ where $K/k$ is a function field and $k$ itself is a global field.
We impose no restriction on the characteristic of $k$.
We can associate a height to elements of $k$ in the usual way (\cite[Chapter 3, \S 3]{Lang}),
namely, for $a \in k$, the height of $a$ is
$-\sum_v \min\{0,v(a)\}$, where $v$ runs through the places of $k$. If we represent $K$ as
a finite extension of $k(x)$ for some transcendental element $x$ of $K$,
for an element $a \in K$, there is a polynomial $P(x,y) \in k[x,y], P(x,a)=0$
of minimal degree. We call the coefficients (in $k$) of $P$ simply the
coefficients of $a$ and use the maximum of their heights as a measure of the
complexity of $a$.
Given $G(T)\in K[T]$, for all but finitely primes $\fp$ of $k$, 
we can consider the reduction of $G(T)$ modulo $\fp$.

\begin{Theorem}
\label{thm:height}
Let $k$ be a global field, $K/k$ a function field and $G(T) \in K[T]$
an irreducible polynomial, as in equation \ref{eq:poly} and define $\Delta$
as in equation \ref{eq:Delta}. Let $H$ be the maximum height of the coefficients
of the $a_i$. The norm $N(\fp)$ of the primes $\fp$ of $k$ for which the
reduction of $G(T)$ modulo $\fp$ is either undefined or reducible satisfies
$N(\fp) = O(H^{(\Delta+1)^3})$, where the implied constant depends on $s, \Delta$.

\end{Theorem}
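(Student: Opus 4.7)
The plan is to convert the Wronskian-based irreducibility criterion underlying Algorithm \ref{alg:main} into a single nonzero element $N \in K$ whose divisibility by $\fp$ is forced whenever $\bar G$ becomes reducible over $\Kbar$, and then estimate $h(N)$ in terms of $H$ and $\Delta$ by tracking the growth of heights through iterated Hasse derivatives, determinants, and the norm from $L := K[T]/(G(T))$ down to $K$.

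First, for each candidate factor degree $r \in \{1,\ldots,s-1\}$, I would use Lemma \ref{lem:poles} to bound the pole orders of the coefficients of every monic degree-$r$ factor of $G$, and choose $V_{r,0},\ldots,V_{r,r-1}\subset K$ to be Riemann--Roch spaces accommodating those bounds. For all but a controlled finite set of primes $\fp$ of $k$ (the ``bad reduction'' primes, which can be absorbed into the final big-$O$), the reductions $\overline{V_{r,i}} \subset \Kbar$ still contain the coefficients of every degree-$r$ factor of $\bar G$. Apply Algorithm \ref{alg:main} with this data; since $G$ is irreducible over $K$, $L$ is a field and, as in the proof of Theorem \ref{thm:main} (which ultimately rests on \cite[Satz 2]{Schmidt}), the associated Wronskian matrix $M_r$ has full rank $m_r+1$ over $L$. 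Pick any nonzero $(m_r+1)\times(m_r+1)$ minor $\delta_r\in L$ and set
\[
N \;:=\; \prod_{r=1}^{s-1} \Norm_{L/K}(\delta_r) \;\in\; K\setminus\{0\}.
\]

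Next comes the key algebraic step: at any good-reduction prime $\fp$ for which $\bar G$ is reducible, $N$ reduces to zero in $\Kbar$. Indeed, let $\bar G_j$ be an irreducible factor of $\bar G$ of degree $r$, and decompose $\Lbar = \Kbar[T]/(\bar G) = \prod_i \Lbar_i$. In the summand $\Lbar_j$ of degree $r$ over $\Kbar$, the image $\bar t_j$ of $T$ satisfies $\sum_{i=0}^r \bar b_i\,\bar t_j^{\,i} = 0$ where the $\bar b_i$ are the coefficients of $\bar G_j$. Because each $\bar b_i$ lies in the reduction of $V_{r,i}$, this gives a nontrivial $\bar k$-linear dependence among the reductions of the entries of $\Phi_r$ inside $\Lbar_j$, which forces the $j$-th block of $\bar M_r$ to have rank at most $m_r$ (again by \cite[Satz 2]{Schmidt}). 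All its $(m_r+1)\times(m_r+1)$ minors vanish in $\Lbar_j$, so $\bar\delta_r$ has zero component there, and multiplicativity of norms across the product decomposition yields $\overline{\Norm_{L/K}(\delta_r)} = 0$, hence $\bar N = 0$.

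Finally, and this is the main obstacle, I would bound $h(N)$. The entries of $M_r$ are $D^{(\ell)}(h_{ij}t^i)$ with $\ell < q = O(p\Delta)$; the recursion \eqref{eq:dg} expresses $D^{(\ell)}(t)$ as a polynomial in lower Hasse derivatives of $t$ over $K$ divided by a bounded power of $G'(t) \in L^\times$, while the Leibniz rule \eqref{eq:hasse} combines these with derivatives of the Riemann--Roch basis elements $h_{ij}$. Represented as polynomials in $t$ of degree $<s$ with coefficients in $K$, each entry then has coefficient-height $O(\Delta\cdot H)$. Taking an $(m_r+1)\times(m_r+1)$ determinant multiplies this by another $O(\Delta)$, applying $\Norm_{L/K}$ by $s = O(\Delta)$, and the product over the $s-1$ values of $r$ contributes a further polynomial factor, giving $h(N) = O(H\,(\Delta+1)^3)$ with implied constants depending only on $s$ and $\Delta$. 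Any prime $\fp$ with $v_\fp(N) > 0$ satisfies $\log N(\fp) \le h(N)$, yielding $N(\fp) = O(H^{(\Delta+1)^3})$. The delicate part of the argument is this last step: a careful arithmetic accounting of how heights propagate through iterated Hasse derivatives in the degree-$s$ extension $L/K$, controlling the $G'(t)^{-1}$ denominators introduced by \eqref{eq:dg} and verifying that the cubic exponent really does absorb the three independent sources of inflation (differentiation order, determinant size, and norm degree).
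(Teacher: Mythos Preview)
Your argument follows the paper's line: since $G$ is irreducible, the Wronskian matrix $M$ from Algorithm~\ref{alg:main} (built in $L=K[T]/(G)$) has full rank, so some maximal minor is nonzero; reducibility of $\bar G$ forces this minor to die modulo~$\fp$; and a height estimate on the minor, obtained by propagating the recursion~\eqref{eq:dg}, then bounds $N(\fp)$. The paper's own proof is a three-sentence sketch: it works directly in $R=\calO[T]/(G)$ with no characteristic hypothesis, asserts that the coefficients of $D^{(i)}(t^j)$ have height $O(H^{ij})=O(H^{(\Delta+1)^2})$ by induction on~\eqref{eq:dg}, and concludes that a single $(\Delta+1)\times(\Delta+1)$ minor already has height $O(H^{(\Delta+1)^3})$. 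Your product over all factor degrees $r$ and your descent via $\Norm_{L/K}$ are additions to that sketch; they are sound, and the norm step (or something equivalent) is arguably needed, since reducibility of $\bar G$ only kills the minor in \emph{one} local summand of $\bar L$, so one must pass to $K$ to obtain an element that genuinely vanishes modulo~$\fp$.

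Two points need tightening. First, drop the reference to $q=O(p\Delta)$: Theorem~\ref{thm:height} carries no characteristic hypothesis (the paper says so explicitly just before the statement), so one works in $R$ with Hasse-derivative orders $\le\Delta$, not in $R_1$. Second, and more substantively, your final implication does not follow. From $h(N)=O\bigl(H(\Delta+1)^3\bigr)$ and $\log N(\fp)\le h(N)$ you obtain only $N(\fp)\le\exp\bigl(O(H(\Delta+1)^3)\bigr)$, not $O\bigl(H^{(\Delta+1)^3}\bigr)$. The paper's accounting is multiplicative throughout (entries of height $O(H^{ij})$, minor of height $O(H^{(\Delta+1)^3})$); to land on the stated bound you must do the same, i.e.\ show that the \emph{degree} of each entry of $M$ as a rational function of the $a_j$ and their Hasse derivatives is $O\bigl((\Delta+1)^2\bigr)$, rather than that its logarithmic height is $O(\Delta H)$, and then carry that degree bound through the determinant, norm, and product.
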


\begin{proof}
Since $G(T)$ is assumed irreducible, the matrix $M$ in Algorithm \ref{alg:main}
has maximal rank and so a maximal minor has non-zero determinant. Note that
$\Delta + 1$ is a bound for the number of columns (hence also of rows) of $M$.
We now compute $M$ with entries in $R$ (with no restriction on the characteristic 
of $k$). It follows by induction from equation \ref{eq:dg} that the height of the coefficients
of $D^{(i)}(t^j)$ is $O(H^{ij}) = O(H^{(\Delta+1)^2})$, hence the height of the coefficients
of the (non-zero) determinant of a maximal $(\Delta+1)\times (\Delta+1)$-minor of $M$ is 
$O(H^{(\Delta+1)^3})$. But for the
reduction of $G(T)$ modulo $\fp$ to be either undefined or reducible, $\fp$ needs to
divide the determinant of this minor and the result follows.
\end{proof}

In the case where $k=\F_q(Z)$, the approach of the
previous theorem can be used to prove a proximity gap 
statement (in the sense of \cites{prox,BN}) for Algebraic Geometry codes.

\subsection{List decoding}
\label{subsec:list}

A different application concerns the 
the Guruswami--Sudan list-decoding algorithm for Reed--Solomon
or Algebraic Geometry codes \cite{Gao}.
If one has a fixed code, there are fixed finite dimensional $k$-vector 
spaces $V_0, W_0,\ldots,W_{s-1} \subset K$ and, for each received message, a 
polynomial $G(T) \in K[T]$ is constructed with coefficients $a_i \in W_i$
(the precise construction is
irrelevant at the moment), for which we want to know
its roots in $V_0$. That is precisely the problem we dealt with above,
with $r=1$. 

For an alternative approach to improving the factorization step of
the Guruswami--Sudan algorithm, see \cite{MR1806819}. Additionally, \cite[Theorem 5]{MR1806819} provides an alternative way of finding a suitable place (in the sense of Section \ref{sec:place}) to be used in either approach to factorization in the context of the Guruswami--Sudan algorithm.


For a prime power $q$ and an integer $k,  1 \le k \le q$, the Reed-Solomon code $RS_k(q)$ is the subspace of the $q$-dimensional space of functions $\F_q \to \F_q$ corresponding to the functions represented by a polynomial $f(x) \in \F_q[x], \deg f < k$. Define, as usual for $g_1,g_2: \F_q \to \F_q$, the Hamming distance 
$d(g_1,g_2)= \# \{\al \in \F_q \mid g_1(\al) \ne g_2(\al)\}$.

McEliece \cite{McE} has shown that, for a fixed Reed-Solomon code, most invocations of the Guruswami--Sudan algorithm output a list of codewords of size $0$ or $1$. We will strengthen this result and show that, for a certain range of Reed-Solomon codes, for most invocations of the Guruswami--Sudan algorithm, the polynomial $G(T) \in K[T]$ has at most
one root in $V_0$, so we can take full advantage of our factoring algorithm's enhanced perfomance in this situation. Our result is meant to be illustrative and one might expect a similar result in much greater generality.

The following proposition sets the stage for our result and is a special case of Sudan's original precursor \cite{Sudan} to the Guruswami--Sudan algorithm.

\begin{Proposition}
\label{prop:sudan}
Assume $(q+5)/10 < k \le q/8$ and that $g: \F_q \to \F_q$ is any function. Then there exists a non-zero polynomial 
$G(x,y) = a_0(x) +a_1(x)y+a_2(x)y^2+a_3(x)y^3+a_4(x)y^4 \in \F_q[x,y]$ with $\deg a_i \le (4-i)(k-1), i=0,\ldots, 4$
such that $\forall \al \in \F_q, G(\al,g(\al))=0$. Moreover, if $f(x) \in RS_k(q)$ satisfies $d(f,g) \le [q/2]$ then $G(x,f(x))$
is identically zero.
\end{Proposition}

\begin{proof}
Each condition $G(\al,g(\al))=0$ is a linear equation on the coefficients of the $a_i$. The total number of coefficients
is $\sum_{i=0}^4 ((4-i)(k-1)+1) = 10k -5 > q$, by hypothesis. So there is a $G$, as prescribed. If $f(x) \in RS_k(q)$ satisfies $d(f,g) \le [q/2]$ then $G(x,f(x))$ is a polynomial with at least $q/2$ zeros but, since $\deg f \le  k - 1$, we have 
$\deg G(x,f(x)) \le 4(k-1) < q/2$ by hypothesis, hence $G(x,f(x))$ is identically zero.
\end{proof}

So, as mentioned above, given such $G$, we need to find its factors of the form $y-f(x), \deg f < k$ to find the elements 
$f(x) \in RS_k(q), d(f,g) \le [q/2]$ and, by construction, there are at most $4$ such. This is the ``list'' in the list-decoding algorithm. It is easy to see that, for most choices of $g$, the list is empty. Indeed, there are $q^q$ possible $g$'s and at most 
$q^k\sum_{i=0}^{[q/2]} \binom{q}{i} (q-1)^i$ of those satisfy $d(f,g) \le [q/2]$ for some $f \in RS_k(q)$ and the sum is asymptotic to
$q^{k+q/2+o(q)}$, hence bounded above by $q^{5q/8 + o(q)}$, much smaller than $q^q$.

More interestingly, McEliece \cite{McE} has shown that, for most choices of $g$ such that there exists some $f \in RS_k(q)$ with $d(f,g) \le [q/2]$, the $f$ is unique. It follows that $q^{k+q/2+o(q)}$ is the correct order of magnitude for the number of $g$ with $d(f,g) \le [q/2]$ for some $f \in RS_k(q)$. The following result strengthens this result by showing that, for most choices of $g$ such that there exists some $f \in RS_k(q)$ with $d(f,g) \le [q/2]$, any polynomial $G$, output of Proposition \ref{prop:sudan}, has only one factor of the form $y-f(x), \deg f < k$. This justifies our claim that a factoring algorithm that performs better when there is only one factor of prescribed type improves the running time of the list decoding algorithm.

\begin{Theorem}
Assume $(q+5)/10 < k < q/8$. Among those $g: \F_q \to \F_q$ such that there exists some $f \in RS_k(q)$ with $d(f,g) \le q/2$, the number of those for which a polynomial $G$, output of Proposition \ref{prop:sudan}, has more than one factor of the form $y-f(x), \deg f < k$
is at most $q^{5k + o(q)}$.
\end{Theorem}

\begin{proof}
We assume that $d(f_0,g) = \delta \le [q/2]$, for some $f_0$ as above. Replacing $g$ by $g-f_0$, where $f_0$ is one choice, we can assume $f_0=0$. By Proposition \ref{prop:sudan}, $G(x,y)=yH(x,y)$, for some $H$. Moreover, $H(\al, g(\al))=0$ for $\delta$ values of $\al$. We split the proof in two cases, depending on whether or not there exists $\alpha \in \F_q$ with $H(\alpha,y)$ identically zero.

Suppose first that there is no such $\al$. The number of $g$'s for a given choice of $H$ is at most $\binom{q}{\delta} 3^{\delta}$. Indeed, there are $\binom{q}{\delta}$ choices for a subset of size $\delta$ of $\F_q$ and, for each $\al$ in this subset, at most $3$ choices for a solution of $H(\al,y)=0$ as $H$ is cubic in $y$ (for the $\al$ outside of the subset we set $g(\al)=0$). If, in turn, $H(x,y)=(y-f(x))S(x,y)$ with $f \in RS_k(q)$ and 
$$S(x,y)=b_0(x) +b_1(x)y+b_2(x)y^2, \deg b_i \le (2-i)(k-1), i=0,1, 2,$$
 we can count the number of coefficients, namely $k$ for $f$ and $3k$ for the $b_i$'s. Hence at most $q^{4k}$ possible such $H$'s and at most $\sum_{\delta\le [q/2]}\binom{q}{\delta} 3^{\delta}q^{4k}$ possible such $g$'s. If we now allow the initial $f_0$ to vary (as opposed to be set to $f_0=0$) we end up with a total of at most 
$\sum_{\delta\le [q/2]} \binom{q}{\delta} 3^{\delta} q^{5k}$ for the count of the theorem and this is at most $q^{5k + o(q)}$, as desired.

Let's now assume that there are $\alpha \in \F_q$ with $H(\alpha,y)$ identically zero and let $c(x)$ be the monic polynomial that has these $\alpha$'s as roots.
Since the coefficient of $y^3$ in $H(x,y)$ as a polynomial in $y$ is constant, we conclude in this case that it is zero. We can then write $H$ (in the case that $G$ has a second root $f \in RS_k(q)$) as
$$H(x,y) = c(x)(y-f(x))(b_0(x)/c(x) +(b_1(x)/c(x))y).$$ 
If we fix $\deg c = m$,
the number of possible $H$'s is at most $q^m q^k q^{2k-m} q^{k-m} = q^{4k-m}$ by counting the number of possible $c,f,b_0/c,b_1/c$ respectively. The number of $g$ given $H$ is bounded above by $3^{q-m} q^m$ by counting the options for $g(\al)$ when $c(\al) \ne 0$ or $c(\al) = 0$, respectively. So, the count in this case is at most $q^{4k + o(q)}$ and again, allowing $m$ and the initial $f_0$ to vary (as opposed to be set to $f_0=0$) we end up with a total of at most $q^{5k + o(q)}$ $g$'s, completing the proof.

\end{proof}

We note that $5k < k + q/2$ if $k<q/8$ which justifies the claim made immediately before the theorem. We also note that there are functions, as in the statement of the theorem, for which $G$ has more than one factor of the required form. For example, if $q$ is even, we can take any $g$ that takes only values $0,1$ each $q/2$ times. Then $G=y(y+1)$ is an output of Proposition \ref{prop:sudan} for this $g$.

Finally, we note that Algorithm \ref{alg:main} vastly simplifies in the context of Proposition \ref{prop:sudan}. In this case $K = \F_q(x), T=y, r=1$. The vector 
$\Phi$ is $(1,x,x^2,\ldots,x^{k-1},y)$, $m=k, \Delta = 4k$ and the matrix $M$ is already in echelon form with rows $(0,0,0,\ldots,0,D^{(\varepsilon)}(y))$ after the first $k$ rows. The only real computation is to express the pivots $D^{(\varepsilon)}(y)$ as elements of $R_1$ and do the computations producing the splitting of $R_1$ in subrings from Subsection \ref{subsec:gauss}. As we want to detect whether $y$ is a polynomial of degree $<k$, it is enough to check whether $D^{(\varepsilon)}(y) = 0$ for $k \le \varepsilon < q$ and 
depending on the base-$p$ expansion of $k$, only a few values of $\varepsilon$ need to be checked, e.g. if $k$ is a power of $p$, then only the powers of $p$ satisfying $k \le \varepsilon < q$ need to be checked.



\section*{Acknowledgements}
This research was funded by the Ministry for Business, Innvovation and Employment in New Zealand.
I would also like to thank 
M. Esgin, V. Kuchta, S. Ruj, A. Sakzad and R. Steinfeld of the Trans-Tasman ZK group
for questions that motivated part of this research
and a number of referees for their very thorough and useful reports.

\section*{Statements}

The author has no competing interests to declare that are relevant to the content of this article.

No datasets were generated or analyzed during the current study.

	
\begin{bibdiv}
\begin{biblist}

\bib{MR1806819}{article}{
   author={Augot, Daniel},
   author={Pecquet, Lancelot},
   title={A Hensel lifting to replace factorization in list-decoding of
   algebraic-geometric and Reed-Solomon codes},
   journal={IEEE Trans. Inform. Theory},
   volume={46},
   date={2000},
   number={7},
   pages={2605--2614},
}

\bib{MR4252300}{article}{
   author={Bary-Soroker, Lior},
   author={Entin, Alexei},
   title={Explicit Hilbert's irreducibility theorem in function fields},
   conference={
      title={Abelian varieties and number theory},
   },
   book={
      series={Contemp. Math.},
      volume={767},
      publisher={Amer. Math. Soc., Providence, RI},
   },
   date={2021},
   pages={125--134},
}

\bib{MR2537701}{article}{
   author={Belabas, Karim},
   author={van Hoeij, Mark},
   author={Kl\"{u}ners, J\"{u}rgen},
   author={Steel, Allan},
   title={Factoring polynomials over global fields},
   journal={J. Th\'{e}or. Nombres Bordeaux},
   volume={21},
   date={2009},
   number={1},
   pages={15--39},
   issn={1246-7405},
}

\bib{prox}{article}{
    title = {Proximity Gaps for Reed-Solomon Codes},
   author = {Ben-Sasson, Eli },
   author={Carmon, Dan},
   author={Ishai, Yuval},
   author={Kopparty, Swastik},
   author={Saraf, Shubhangi},
    note = {Cryptology ePrint Archive, Report 2020/654},
    year = {2020},
}

\bib{MR3128698}{article}{
   author={Berthomieu, J\'{e}r\'{e}my},
   author={Lecerf, Gr\'{e}goire},
   author={Quintin, Guillaume},
   title={Polynomial root finding over local rings and application to error
   correcting codes},
   journal={Appl. Algebra Engrg. Comm. Comput.},
   volume={24},
   date={2013},
   number={6},
   pages={413--443},
}

\bib{BN}{article}{
      title={Interactive Oracle Proofs of Proximity to Algebraic Geometry Codes}, 
      author={Bordage, Sarah},
      author={Lhotel, Mathieu},
      author={Nardi, Jade},
      author={Randriam, Hugues},
      booktitle =	{37th Computational Complexity Conference (CCC 2022)},
      pages =	{30:1--30:45},
      series =	{Leibniz International Proceedings in Informatics (LIPIcs)},
      year =	{2022},
      volume =	{234},
      publisher =	{Schloss Dagstuhl -- Leibniz-Zentrum f{\"u}r Informatik},
  address =	{Dagstuhl, Germany},
}

\bib{MR3402695}{article}{
   author={Charles, Fran\c{c}ois},
   author={Poonen, Bjorn},
   title={Bertini irreducibility theorems over finite fields},
   journal={J. Amer. Math. Soc.},
   volume={29},
   date={2016},
   number={1},
   pages={81--94},
}

\bib{MR1937466}{article}{
   author={Cormier, Olivier},
   author={Singer, Michael F.},
   author={Trager, Barry M.},
   author={Ulmer, Felix},
   title={Linear differential operators for polynomial equations},
   journal={J. Symbolic Comput.},
   volume={34},
   date={2002},
   number={5},
   pages={355--398},
}

\bib{Gao}{article}{
   author={Gao, Shuhong},
   author={Shokrollahi, M. Amin},
   title={Computing roots of polynomials over function fields of curves},
   conference={
      title={Coding theory and cryptography},
      address={Annapolis, MD},
      date={1998},
   },
   book={
      publisher={Springer, Berlin},
   },
   date={2000},
   pages={214--228},
}

\bib{Gao2}{article}{
   author={Gao, Shuhong},
   title={Factoring multivariate polynomials via partial differential
   equations},
   journal={Math. Comp.},
   volume={72},
   date={2003},
   number={242},
   pages={801--822},
}

\bib{GV}{article}{
   author={Garc\'{\i}a, Arnaldo},
   author={Voloch, Jos\'{e} Felipe},
   title={Wronskians and linear independence in fields of prime
   characteristic},
   journal={Manuscripta Math.},
   volume={59},
   date={1987},
   number={4},
   pages={457--469},
   }

\bib{MR790658}{article}{
   author={von zur Gathen, J.},
   author={Kaltofen, E.},
   title={Factorization of multivariate polynomials over finite fields},
   journal={Math. Comp.},
   volume={45},
   date={1985},
   number={171},
   pages={251--261},
}

\bib{MR2041097}{article}{
   author={Hess, Florian},
   title={An algorithm for computing Weierstrass points},
   conference={
      title={Algorithmic number theory},
      address={Sydney},
      date={2002},
   },
   book={
      series={Lecture Notes in Comput. Sci.},
      volume={2369},
      publisher={Springer, Berlin},
   },
   date={2002},
   pages={357--371},
}

\bib{HessRR}{article}{
   author={Hess, F.},
   title={Computing Riemann-Roch spaces in algebraic function fields and
   related topics},
   journal={J. Symbolic Comput.},
   volume={33},
   date={2002},
   number={4},
   pages={425--445},
}

\bib{honda}{article}{
   author={Honda, Taira},
   title={Algebraic differential equations},
   conference={
      title={Symposia Mathematica, Vol. XXIV},
      address={Sympos., INDAM, Rome},
      date={1979},
   },
   book={
      publisher={Academic Press, London-New York},
   },
   date={1981},
   pages={169--204},
}

\bib{Lang}{book}{
   author={Lang, Serge},
   title={Fundamentals of Diophantine geometry},
   publisher={Springer-Verlag, New York},
   date={1983},
   pages={xviii+370},
   isbn={0-387-90837-4},
   review={\MR{715605}},
   doi={10.1007/978-1-4757-1810-2},
}

\bib{MR2600710}{article}{
   author={Lecerf, Gr\'{e}goire},
   title={New recombination algorithms for bivariate polynomial factorization based on Hensel lifting},
   journal={Appl. Algebra Engrg. Comm. Comput.},
   volume={21},
   date={2010},
   number={2},
   pages={151--176},
   }
 
\bib{McE}{article}{
author={R. J. McEliece}
title={On the Average List Size for the Guruswami-Sudan Decoder}
conference={ title={7th International Symposium on Communications Theory and Applications (ISCTA)},}
date={2003}
}

\bib{MR3703706}{article}{
  author={Neiger, Vincent},
   author={Rosenkilde, Johan},
   author={Schost, \'{E}ric},
   title={Fast computation of the roots of polynomials over the ring of power series},
   conference={ title={ISSAC'17---Proceedings of the 2017 ACM International Symposium on Symbolic and Algebraic Computation}, },
   book={ publisher={ACM, New York}, },
   date={2017},
   pages={349--356},
}

\bib{MR2168610}{article}{
   author={Pohst, Michael E.},
   title={Factoring polynomials over global fields. I},
   journal={J. Symbolic Comput.},
   volume={39},
   date={2005},
   number={6},
   pages={617--630},
}

\bib{Rup}{article}{
   author={Ruppert, Wolfgang M.},
   title={Reducibility of polynomials $f(x,y)$ modulo $p$},
   journal={J. Number Theory},
   volume={77},
   date={1999},
   number={1},
   pages={62--70},
}
	
\bib{Schmidt}{article}{
   author = {Schmidt, Friedrich Karl},
	journal = {Mathematische Zeitschrift},
	number = {1},
	pages = {62--74},
	title = {Die Wronskische Determinante in beliebigen differenzierbaren Funktionenk{\"o}rpern},
	volume = {45},
	year = {1939},
	}   
   
\bib{Steel}{article}{
   author={Steel, Allan},
   title={Conquering inseparability: primary decomposition and multivariate
   factorization over algebraic function fields of positive characteristic},
  journal={J. Symbolic Comput.},
  date={2005},
   number={3},
   pages={1053--1075},
}

\bib{SV}{article}{
   author={St\"{o}hr, Karl-Otto},
   author={Voloch, Jos\'{e} Felipe},
   title={Weierstrass points and curves over finite fields},
   journal={Proc. London Math. Soc. (3)},
   volume={52},
   date={1986},
   number={1},
   pages={1--19},
   }
   
   \bib{Sudan}{article}{
   author={Sudan, Madhu},
   title={Decoding of Reed Solomon codes beyond the error-correction bound},
   journal={J. Complexity},
   volume={13},
   date={1997},
   number={1},
   pages={180--193},
  }

\bib{MR3709331}{article}{
   author={Weimann, Martin},
   title={Bivariate factorization using a critical fiber},
   journal={Found. Comput. Math.},
   volume={17},
   date={2017},
   number={5},
   pages={1219--1263},
   }

\end{biblist}
\end{bibdiv}
	
\end{document}